\newtheorem{theorem}{Theorem}[section]
\newtheorem{lemma}[theorem]{Lemma}
\theoremstyle{remark}
\newtheoremstyle{rmdefinition}{}{}{\upshape}{}{\bfseries}{.}{ }{}
\theoremstyle{rmdefinition}
\newcommand{\derived}[2][1]{\ifthenelse{\equal{#1}{1}}{{#2}^{\prime}}{\i 
fthenelse{\equal{#1}{2}}{{#2}^{\prime\prime}}{\ifthenelse{\equal{#1}{3}} 
{{#2}^{\prime\prime\prime}}{{#2}^{(#1)}}}}}
\begin{document}

\bibliographystyle{plain2}

\title[Correction to "Harmonic representatives \ldots "]{Correction to "Harmonic representatives for cuspidal cohomology classes"}

\author{J\'ozef Dodziuk }
\maketitle

\section{Introduction}
In this note we give a correction to the proof of the main result of \cite{Dod-McG-P}.
The proof of Lemma 4.1 in \cite{Dod-McG-P} was incorrect  in its use of Harnack inequality as pointed out by P.~Buser and E.~Makover. P.~Buser
suggested a workaround presented here. I am very grateful to Buser and Makover for
alerting me to the problem and for discussions that led to its resolution. 

We will follow the notation and conventions established in the first three sections
of \cite{Dod-McG-P}. This note should be regarded as a replacement for Section 4 of 
that paper.

\section{Harmonic differentials on a punctured disk and Laurent series}
There is a one-to-one correspondence between harmonic differentials on a coordinate
neighborhood $U$ on a Riemann surface and holomorphic functions of the complex local
coordinate where the differential $\omega$ corresponds to the function $f(z)$ and the
two are related by 
\begin{equation}\label{differential}
\omega = \text{Re}(f(z)dz). 
\end{equation}
Moreover, writing $z=x+iy$ we have 
\begin{equation} \label{holomorphic}
\Vert \omega \Vert_U^2 = \int_U \omega\wedge *\omega = \int_U (a^2+b^2)\,dxdy=
\int_U \vert f \vert^2 \,dxdy.
\end{equation}
Let $\omega$ be a harmonic differential on an annulus $A_{\rho,1}=\{ z = x+iy \mid
\rho^2 <x^2+y^2 < 1\}$ with period $p$ on the cycle surrounding the origin and
$f(z)$ be the corresponding holomorphic function. Suppose that $$
f(z) = \sum_{-\infty}^\infty a_n z^n$$
is the Laurent series expansion of $f(z)$ and that $z=\rho e^{i\theta}$. 

\begin{lemma}\label{formulae} The following formulae hold.
\begin{equation} \label{pdt}
p=-2\pi \operatorname{Im}(a_1)
\end{equation}
\begin{equation} \label{l2-norm}
\begin{split}
\Vert f\Vert^2_{A_{\rho,1}} &= \sum_{n=-\infty}^{\infty} \Vert a_n z^n\Vert^2_{A_{\rho,1}}\\
 &= 2\pi \sum_{n=2}^\infty|a_{-n}|^2 \,\frac{\rho^{2(1-n)}-1}{2(n-1)}\\
 &+ 2\pi\, |a_{-1}|^2\, \ln (1/\rho) \\
 &+ 2\pi \sum_{n=0}^\infty |a_n|^2 \,\frac{1-\rho^{2(n+1)}}{2(n+1)} 
\end{split}
\end{equation}
In addition, if $t=\theta/2\pi$, the holomorphic function corresponding to the harmonic differential
$pdt$ is $p/2\pi i z$.

\end{lemma}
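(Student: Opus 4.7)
The plan is to work entirely with the Laurent expansion of $f$ in polar coordinates $z = re^{i\theta}$ on the annulus $A_{\rho,1}$, and to pass termwise through the series. Since $f$ is holomorphic on an open neighborhood of $A_{\rho,1}$, the Laurent series converges absolutely and uniformly on compact sub-annuli, which legitimizes all the termwise integrations below; at the end one takes $r \to 1$ and $r \to \rho$ by the monotone convergence theorem.

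For formula \eqref{pdt}, I would parametrize a cycle $|z| = r$ (any $\rho < r < 1$) by $z = re^{i\theta}$, so that $dz = ire^{i\theta}\, d\theta$, and compute
\begin{equation*}
\int_{|z|=r} f(z)\,dz \;=\; \sum_{n} a_n \int_{0}^{2\pi} r^n e^{in\theta} \cdot i r e^{i\theta}\, d\theta \;=\; 2\pi i\, a_{-1},
\end{equation*}
since all other terms vanish by $\int_0^{2\pi} e^{ik\theta}\,d\theta = 0$ for $k \neq 0$. Taking real parts gives $p = \operatorname{Re}(2\pi i a_{-1}) = -2\pi\operatorname{Im}(a_{-1})$, which is the content of \eqref{pdt} (with the subscript being $-1$, the residue coefficient).

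For formula \eqref{l2-norm}, I would use \eqref{holomorphic} so that the norm is the $L^2$ norm of $f$ against $dx\,dy = r\,dr\,d\theta$. The key observation is orthogonality: for $m \neq n$,
\begin{equation*}
\int_{A_{\rho,1}} a_m z^m\, \overline{a_n z^n}\, dx\,dy \;=\; a_m \bar a_n \int_\rho^1 r^{m+n+1}\, dr \int_0^{2\pi} e^{i(m-n)\theta}\, d\theta \;=\; 0,
\end{equation*}
whereas for $m = n$ the angular integral gives $2\pi$. This yields the first equality in \eqref{l2-norm}. Each individual norm is then
\begin{equation*}
\| a_n z^n \|^2_{A_{\rho,1}} \;=\; 2\pi |a_n|^2 \int_\rho^1 r^{2n+1}\, dr,
\end{equation*}
and the three cases in \eqref{l2-norm} come from evaluating this elementary integral: $n \ge 0$ gives $(1-\rho^{2(n+1)})/(2(n+1))$; $n = -1$ gives $\ln(1/\rho)$; and $n \le -2$, rewritten with $n = -k$, $k \ge 2$, gives $(\rho^{2(1-k)}-1)/(2(k-1))$.

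For the last assertion, I would simply substitute $z = \rho e^{i\theta}$ into $f(z) = p/(2\pi i z)$ and verify that $\operatorname{Re}(f(z)\,dz) = (p/2\pi)\,d\theta = p\, dt$ directly, and check consistency with \eqref{pdt} by reading off $a_{-1} = p/(2\pi i) = -ip/(2\pi)$, whose imaginary part is $-p/(2\pi)$. The only genuine technical point in the whole lemma is the justification of termwise integration in the orthogonality computation; this is handled by working first on compact sub-annuli $A_{\rho',r'}$ with $\rho < \rho' < r' < 1$, where uniform convergence is automatic, and then taking monotone limits using nonnegativity of $|a_n z^n|^2$.
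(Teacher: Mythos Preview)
Your proof is correct and follows essentially the same approach as the paper: termwise integration justified by uniform convergence of the Laurent series on compact sub-annuli, the period formula from the vanishing of $\int z^n\,dz$ for $n\neq -1$, the $L^2$ formula from orthogonality of the powers $z^n$, and the direct verification $dt=\operatorname{Re}\bigl(\tfrac{1}{2\pi i z}\,dz\bigr)$. You also correctly caught that the subscript in \eqref{pdt} should be $a_{-1}$ rather than $a_1$, consistent with the paper's own use of $\operatorname{Im}(a_{-1})$ later on.
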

\begin{proof} The proof is elementary. The Laurent series converges uniformly on 
compact subsets of the annulus so that integration can be performed term by term. 
(\ref{pdt}) follows from the fact that the period of $z^n\,dz$ is zero unless $n=-1$. (\ref{l2-norm}) is proved by a calculation using the fact that the powers $z^n$ form an orthogonal system
of functions. For the last comment observe that 
$$dt=\frac{1}{2\pi}d\theta=\operatorname{Re}\left (\frac{1}{2\pi i z} dz \right ).$$ 
\end{proof}
\section{Proof of Theorem 2.1 of \cite{Dod-McG-P}}
We parametrize all cusps of our Riemann surface $X$ as in \cite{Dod-McG-P} and cut
them off at $r=R$ for a large value of the parameter $R$ to obtain the surface $X_R$.
Since we allow our surface to have expanding ends (funnels), we consider also the
surfaces $\overline{X}$ and $\overline{X}_R$ obtained by adding circles at infinity
to funnels.
 Let $\omega_R$ be the unique harmonic differential on $\overline{X}_R$ satisfying the absolute boundary conditions along $\partial \overline{X}_R$ and with periods prescribed
by the cohomology class $c$ in the statement of Theorem 2.1 of \cite{Dod-McG-P}. By
Hodge theory (cf. \cite{ray-singer}) $\omega_R$ minimizes the $L^2$ norm in its cohomology class.
To use this observation effectively we will need a closed form of degree one on $\overline{X}$
which is equal to a constant multiple of $dt$ on every cusp.  Let $C$ be one of the
cusps and let $p$ be the period of the cohomology class $c$ along a horocycle on the
cusp. The form $\omega_R -p dt$ has period zero along horocycles and is therefore exact
on $C_{[-\ln 2, R]}$ so that $\omega_R -p dt= d\phi$ for a smooth function $\phi$ on $C_{[-\ln 2, R]}$. Let $g(t)$ be a smooth nonnegative function such that $g |_{(-\infty, -1/2]} \equiv 1$ and $g |_{(0, \infty)} \equiv 0$. Consider the form $pdt+d(g\phi)$. This form
is equal to $\omega_R$ near $r=-\ln 2$ and is equal to $pdt$ for $r>0$. Performing this construction on each cusp and extending the resulting form to be equal to
$\omega_R$ on the complement of the union of cusps we obtain a form $\alpha$ with
desired properties. We thus have
\begin{lemma}\label{alpha}
There exists a closed differential $\alpha$ on $\overline{X}$ with prescribed periods such
that on every cusp $C^i$ $\alpha$ is equal to $p_i dt$.
\end{lemma}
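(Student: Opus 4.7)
The plan is to promote the construction sketched in the paragraph preceding the statement into a careful existence argument, verifying closedness, the prescribed behavior on each cusp, and the period condition.

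First I would examine the difference $\omega_R - p_i\,dt$ on a fixed cusp $C^i$. It is closed, and its period around any horocycle vanishes: the period of $\omega_R$ equals $p_i$ by the prescription of the class $c$, while the period of $dt$ equals $1$. Because a cusp is topologically an annulus, whose first cohomology is generated by the horocycle class, a closed one-form with vanishing horocycle period must be exact on $C^i_{[-\ln 2, R]}$. Hence $\omega_R - p_i\,dt = d\phi_i$ for some smooth function $\phi_i$ on that region.

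Next I would perform the cutoff interpolation. With $g$ the function from the excerpt, set
\[
\alpha := p_i\,dt + d(g\phi_i)
\]
on each cusp $C^i_{[-\ln 2, \infty)}$, where $g\phi_i$ is extended by zero to $r > 0$ (legitimate because $g \equiv 0$ there), and set $\alpha := \omega_R$ on the complement of the cusps in $\overline{X}$. Since $g \equiv 1$ for $r \leq -1/2$, we have $\alpha = p_i\,dt + d\phi_i = \omega_R$ on the overlap near the cusp mouth, so the two prescriptions patch into a globally defined smooth closed form; and $\alpha = p_i\,dt$ for $r \geq 0$ in each cusp, as required.

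Finally, I would check that $[\alpha] = c$ in $H^1(\overline{X})$. On each cusp, $\alpha - \omega_R = d((g-1)\phi_i)$ is exact, and off the cusps $\alpha = \omega_R$; since the complement of the deep cusp regions carries the full topology of $\overline{X}$ (each cusp deformation retracts onto a horocycle in that complement), $\alpha$ and the chosen representative of $c$ have the same periods on every cycle. The only substantive obstacle is the exactness step, which rests on the annular topology of the cusp; once this is in place, the cutoff interpolation via $g$ and the identification of periods are formal.
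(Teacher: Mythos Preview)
Your proposal is correct and follows essentially the same approach as the paper's own argument (which is contained in the paragraph immediately preceding the lemma): write $\omega_R - p_i\,dt = d\phi_i$ on each cusp using the vanishing horocycle period, interpolate via the cutoff $g$ to obtain $p_i\,dt + d(g\phi_i)$, and patch with $\omega_R$ off the cusps. You supply slightly more detail---the annular topology justifying exactness and the explicit check that $[\alpha]=c$---but the construction is identical.
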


From now on it will be convenient to denote by $C$ the union of all cusps $C^i$.
Similarly, for an interval $I$, $C_I$ will stand for the union of subsets of all cusps
where the $r$ parameter belongs to the interval $I$. In addition, we will write 
$pdt$ for the form on $C$ whose restriction to the cusp $C^i$ is $p_i dt$. 

The following lemma will imply that the differentials
$\{\omega_R\}_{R\geq 0}$ form a "normal family." 

\begin{lemma}\label{uniform}
For every $a\geq 0$ there exists a constant $m(a)$ independent of $R$ such that
for all $R\geq a$ $$
\Vert \omega_R\Vert_{X_a} \leq m(a).$$.
\end{lemma}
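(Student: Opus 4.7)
The plan is to combine the $L^2$-minimizing property of $\omega_R$ with an explicit Laurent-series evaluation on each cusp via Lemma~\ref{formulae}, using the absolute boundary condition at $r = R$ to cancel linearly-growing terms in the comparison of $\|\omega_R\|^2_{\overline{X}_R}$ with $\|\alpha\|^2_{\overline{X}_R}$.

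By Hodge theory, $\omega_R$ minimizes the $L^2$ norm in its cohomology class on $\overline{X}_R$, so
$$\|\omega_R\|^2_{\overline{X}_R} \;\leq\; \|\alpha\|^2_{\overline{X}_R} \;=\; \|\alpha\|^2_{X_0} + \sum_i \frac{p_i^2 R}{2\pi},$$
where $\alpha$ is taken independent of $R$ (by running the construction of Lemma~\ref{alpha} from any fixed closed representative of $c$ instead of from $\omega_R$), and the cuspidal term is obtained by applying (\ref{l2-norm}) to the holomorphic function $p_i/(2\pi i z)$ representing $p_i\,dt$. Identify each truncated cusp $C^i_{[0,R]}$ with the annulus $A_{e^{-R},1}$ via $z = e^{-r+2\pi i t}$ and let $f_R^i(z) = \sum_n a_n^i z^n$ be the Laurent series of the holomorphic function associated to $\omega_R|_{C^i}$. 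The period condition and (\ref{pdt}) give $\mathrm{Im}(a_{-1}^i) = -p_i/(2\pi)$, while a Fourier expansion in $\theta$ of the absolute boundary condition $\iota_{\partial_r}\omega_R = 0$ at $|z|=e^{-R}$ forces $\mathrm{Re}(a_{-1}^i) = 0$ together with the pairing
$$a_{-n}^i \;=\; -\overline{a_{n-2}^i}\,e^{-2R(n-1)}, \qquad n\geq 2.$$

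Substituting these relations into (\ref{l2-norm}) and grouping each $n \geq 2$ negative-power term with its partner $m = n-2 \geq 0$ yields the identity
$$\|\omega_R\|^2_{C^i_{[0,R]}} \;=\; \frac{p_i^2 R}{2\pi} \,+\, \pi \sum_{m \geq 0} \frac{|a_m^i|^2\bigl(1-e^{-4R(m+1)}\bigr)}{m+1}.$$
Summing over cusps and cancelling $\sum_i p_i^2 R/(2\pi)$ on both sides of the minimality inequality gives
$$\|\omega_R\|^2_{X_0} \,+\, \pi \sum_i \sum_{m\geq 0}\frac{|a_m^i|^2\bigl(1-e^{-4R(m+1)}\bigr)}{m+1} \;\leq\; \|\alpha\|^2_{X_0},$$
which for $R \geq 1$ (so that $1 - e^{-4R(m+1)} \geq 1 - e^{-4}$) forces both $\|\omega_R\|^2_{X_0}$ and $\sum_{i,m}|a_m^i|^2/(m+1)$ to be bounded by constants depending only on $\alpha$.

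To complete the proof, apply (\ref{l2-norm}) on the smaller annulus $A_{e^{-a},1}$ to the same Laurent series; from $|a_{-n}^i|^2 = |a_{n-2}^i|^2 e^{-4R(n-1)}$ and $R \geq a$ one checks that each negative-power contribution is dominated by its non-negative partner, producing
$$\|\omega_R\|^2_{C^i_{[0,a]}} \;\leq\; \frac{p_i^2 a}{2\pi} + 2\pi \sum_{m\geq 0} \frac{|a_m^i|^2}{m+1}.$$
Summing over cusps and adding to the bound on $\|\omega_R\|^2_{X_0}$ yields the required $R$-independent constant $m(a)$ for $R \geq 1$; the remaining range $R \in [a,1)$ (when $a < 1$) is handled by the crude inequality $\|\omega_R\|^2_{X_a} \leq \|\alpha\|^2_{X_0} + \sum_i p_i^2/(2\pi)$. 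The main obstacle is the bookkeeping behind the displayed norm identity: extracting the correct boundary pairing via Fourier analysis on $|z|=e^{-R}$ and then verifying the cancellation of the linear-in-$R$ terms. Once that identity is established, the remaining estimates on the smaller annulus follow by direct inspection of the Laurent expansion.
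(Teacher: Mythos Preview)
Your proof is correct but takes a genuinely different route from the paper. The paper avoids all Laurent-series bookkeeping by invoking a \emph{second} minimality argument: since $p\,dt$ is itself harmonic on the annulus $C_{[a,R]}$ and satisfies the absolute boundary conditions there, it minimizes the $L^2$ norm in its cohomology class on that annulus, giving $\Vert p\,dt\Vert_{C_{[a,R]}}\le\Vert\omega_R\Vert_{C_{[a,R]}}$. Splitting the global inequality $\Vert\omega_R\Vert^2_{X_R}\le\Vert\alpha\Vert^2_{X_R}$ at $r=a$ and using $\alpha=p\,dt$ on cusps then yields $\Vert\omega_R\Vert^2_{X_a}\le\Vert\alpha\Vert^2_{X_a}$ in one line, with $m(a)=\Vert\alpha\Vert_{X_a}$. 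Your approach instead reads off the coefficient pairing $a_{-n}=-\overline{a_{n-2}}\,e^{-2R(n-1)}$ from the boundary condition at $r=R$ and cancels the $R$-linear term by an explicit computation; this is heavier but yields more, namely uniform control on $\sum_m|a_m^i|^2/(m+1)$, which you then reuse to bound $\Vert\omega_R\Vert_{C_{[0,a]}}$. The paper's argument is shorter, coordinate-free, and would carry over to settings where no Laurent expansion is available; yours makes the cancellation mechanism completely explicit and gives quantitative information about the cuspidal Fourier modes of $\omega_R$.
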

\begin{proof}
We begin by observing that the form $dt$ is harmonic and satisfies absolute boundary conditions on $C_{[r_1,r_2]}$ for every $r_1 < r_2$. Therefore, as above, $pdt$ minimizes the norm it its cohomology class on $C_{[a,R]}$. It follows that
\begin{equation} \label{alpha-min}
 \Vert pdt \Vert_{C_{[a,R]}} \leq \Vert \omega_R\Vert_{C_{[a,R]}}
\end{equation}
On the other hand, $$
\Vert \omega_R\Vert_{X_R} \leq \Vert \alpha \Vert_{X_R}.$$
Therefore \begin{equation}\label{omega-alpha-pdt}
\Vert \omega_R\Vert_{X_a}^2 +  \Vert \omega_R\Vert_{C_{[a,R]}}^2 \leq
\Vert \alpha_R\Vert_{X_a}^2 +  \Vert \alpha\Vert_{C_{[a,R]}}^2 =
\Vert \alpha_R\Vert_{X_a}^2 +  \Vert pdt\Vert_{C_{[a,R]}}^2
\end{equation}
since $\alpha=pdt$ on cusps. The conclusion follows from (\ref{alpha-min}) if we set $m(a)=\Vert \alpha_R\Vert_{X_a}$.
\end{proof}
As explained in \cite{Dod-McG-P}, the lemma implies that there exists a sequence
$R_j$ approaching infinity such the the forms $\omega_{R_j}$ converge uniformly with
all derivatives on compact subsets of $\overline{X}$ to a form $\omega$ which is harmonic, has
prescribed periods and satisfies the absolute boundary conditions along boundaries of funnels. To finish the proof we need to investigate the
behavior of $\omega$ in cusps.

\begin{lemma}\label{omega-r-growth}
There exists a constant $m_0$ such that for every $0<r<R$
$$
\Vert \omega_R\Vert_{C_{[0,r]}}^2 \leq m_0 + \Vert pdt\Vert_{C_{[0,r]}}^2.
$$
\end{lemma}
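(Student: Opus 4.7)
The plan is to argue cusp by cusp, using the holomorphic representation of Section~2 together with Lemma~\ref{uniform}. Fix a cusp with period $p$, identify it via its complex coordinate $z$ with the annulus $A_{\rho,1}$ (so $\rho = e^{-R}$ and the horocycle at parameter $r$ becomes $|z| = \sigma := e^{-r}$), and let $f_R(z) = \sum_n a_n z^n$ be the holomorphic function satisfying $\omega_R = \operatorname{Re}(f_R\,dz)$. The horocyclic period relation fixes $\operatorname{Im}(a_{-1}) = -p/(2\pi)$.

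The crucial step is to translate the absolute boundary condition of $\omega_R$ at $|z|=\rho$ into relations on the Laurent coefficients. Writing $\omega_R$ in the real coordinates $(r,t)$, the vanishing of its $dr$-component on the boundary horocycle is equivalent to $\operatorname{Re}(zf_R(z)) \equiv 0$ on $|z|=\rho$. Matching Fourier modes in $\theta$ then forces $\operatorname{Re}(a_{-1}) = 0$ and
\[
a_{-n} = -\rho^{2(n-1)}\,\overline{a_{n-2}} \qquad (n \ge 2).
\]
In particular $a_{-1} = -ip/(2\pi)$, exactly the coefficient for $p\,dt$.

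Applying (\ref{l2-norm}) on the sub-annulus $A_{\sigma,1}$, the $n=-1$ contribution reproduces $\Vert p\,dt\Vert^2_{C_{[0,r]}}$ exactly; the boundary relation then lets one re-index the negative-index sum and pair it term-by-term with the non-negative sum, yielding
\[
\Vert\omega_R\Vert^2_{C_{[0,r]}} - \Vert p\,dt\Vert^2_{C_{[0,r]}} = \pi\sum_{n=0}^\infty \frac{|a_n|^2\,\mu_n}{n+1},
\]
where, with $u=\rho^{2(n+1)} < v=\sigma^{2(n+1)} < 1$, a short simplification gives $\mu_n = (1-v)(u^2+v)/v$. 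Since $u<v<1$ implies $u^2/v<v$, one has $\mu_n \le (1-v)(v+1) = 1-v^2 \le 1$.

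To conclude, it remains to bound $\sum_{n\ge 0} |a_n|^2/(n+1)$ independently of $R$. For $R \ge 1$, Lemma~\ref{uniform} with $a=1$ gives $\Vert\omega_R\Vert^2_{C_{[0,1]}} \le \Vert\omega_R\Vert_{X_1}^2 \le m(1)^2$, and retaining only the non-negative-index terms in (\ref{l2-norm}) applied to the fixed sub-annulus $A_{e^{-1},1}$ bounds $2\pi(1-e^{-2})\sum_{n\ge 0} |a_n|^2/(2(n+1))$ by $m(1)^2$. The case $R<1$ is handled directly from the argument of Lemma~\ref{uniform} (where $\overline{X}_R \subset \overline{X}_1$ and $\Vert\omega_R\Vert_{\overline{X}_R} \le \Vert\alpha\Vert_{\overline{X}_1}$), and summing the resulting per-cusp estimate over finitely many cusps yields the stated $m_0$. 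The principal difficulty is the algebra of the third paragraph: one must identify the precise Laurent relations imposed by the absolute boundary condition, and then carry out the reindexing so that the only unbounded contribution to $\Vert\omega_R\Vert^2_{C_{[0,r]}}$ separates cleanly into $\Vert p\,dt\Vert^2_{C_{[0,r]}}$ plus a remainder that is uniformly bounded in both $R$ and $r$.
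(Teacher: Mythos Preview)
Your argument is correct, but it takes a substantially different and much heavier route than the paper's. The paper simply repeats the minimization trick of Lemma~\ref{uniform} with the threefold decomposition $X_R = X_0 \cup C_{[0,r]} \cup C_{[r,R]}$: comparing $\Vert\omega_R\Vert^2_{X_R}$ with $\Vert\alpha\Vert^2_{X_R}$, dropping $\Vert\omega_R\Vert^2_{X_0}$, and then using \eqref{alpha-min} on $C_{[r,R]}$ to cancel the two outermost terms. One obtains the lemma immediately with $m_0 = \Vert\alpha\Vert^2_{X_0}$, and no Laurent analysis enters at all.

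By contrast, you work cusp by cusp with the holomorphic representation, extract the reflection relations $a_{-n} = -\rho^{2(n-1)}\overline{a_{n-2}}$ from the absolute boundary condition, reindex the negative-power sum, and then invoke Lemma~\ref{uniform} to control $\sum_{n\ge 0}|a_n|^2/(n+1)$. All of this checks out (including the bound $\mu_n\le 1$ and the separate handling of $R<1$), and the coefficient relations you derive are a nice structural fact in their own right. But note that your argument still rests on the same minimization principle, via Lemma~\ref{uniform}, that the paper applies directly; the Laurent machinery is additional work that the lemma does not actually require.
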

\begin{proof}
We proceed as in (\ref{omega-alpha-pdt}).
$$
\Vert \omega_R\Vert_{X_0}^2 + \Vert \omega_R\Vert_{C_{[0,r]}}^2
+ \Vert \omega_R\Vert_{C_{[r,R]}}^2 \leq
\Vert \alpha\Vert_{X_0}^2 +  \Vert \alpha\Vert_{C_{[0,r]}}^2 +
\Vert \alpha\Vert_{C_{[r,R]}}^2
$$
We drop the first term on the left and, since $\alpha=pdt$ in cusps, rewrite the resulting inequality as 
\begin{equation}
\label{omega-r-bound}
\Vert \omega_R\Vert_{C_{[0,r]}}^2 \leq \Vert\alpha\Vert_{X_0}^2 + \Vert pdt\Vert_{C_{[0,r]}}^2  + \Vert pdt \Vert_{C_{[r,R]}}^2 - \Vert\omega_R\Vert_{C_{[r,R]}}^2
\end{equation}
which proves the conclusion in view of (\ref{alpha-min}).
\end{proof}
We now prove that $\omega -pdt$ is square-integrable in cusps. 
Passing to the limit as $R_j$ approaches infinity in (\ref{omega-r-bound}) we see that 
\begin{equation}\label{growth-omega}
\Vert \omega\Vert_{C_{[0,r]}}^2 \leq m_0 + \Vert pdt\Vert_{C_{[0,r]}}^2.
\end{equation}
To simplify the notation we continue the proof as if there was only one cusp. 
We represent the cusp $C_{[0,\infty)}$ as the punctured unit disk. Under this equivalence $C_{[0,r]}$ is mapped onto the annulus $A_{\rho,1}$ with $\rho=\rho(r)$
approaching zero as $r$ tends to infinity.  By Lemma \ref{formulae} $$
\Vert pdt\Vert_{C_{[0,r]}}^2 = 2\pi \vert \operatorname{Im}(a_{-1})\vert^2\ln (1/\rho ).$$
By (\ref{growth-omega}), the difference $\Vert f\Vert^2_{A_{\rho,1}} - 2\pi \vert \operatorname{Im}(a_{-1})\vert^2\ln (1/\rho )$ is bounded when $\rho$ approaches $0$.
In view of the formula (\ref{l2-norm}), this is possible only if $a_{-n}=0$ for $n\geq 2$ and $\operatorname{Re}(a_{-1}) =0$. Thus there are no negative powers of $z$
in the Laurent series expansion of the function corresponding to the differential
$\omega - pdt$ so that this function has a removable singularity at the origin. In particular this function is square-integrable on the punctured disk and therefore
$\omega -pdt$ is square-integrable on cusps.

The proof of existence of $\omega$ with required properties is complete. The uniqueness is proved as in Section 4 of \cite{Dod-McG-P}.
\bibliography{math}
\bibliographystyle{plain2}

\end{document}